\patchcmd\Gread@eps{\@inputcheck#1 }{\@inputcheck"#1"\relax}{}{}
\newtheorem{theorem}{Theorem}[section]
\newtheorem{corollary}[theorem]{Corollary}
\newtheorem{remark}[theorem]{Remark}
\newtheorem{example}[theorem]{Example}
\newcommand{\qed}{\hfill $\square$\medskip}
\begin{document}

\title{Some results on the super domination number of a graph}

\author{
Nima Ghanbari
}

\date{\today}

\maketitle

\begin{center}
Department of Informatics, University of Bergen, P.O. Box 7803, 5020 Bergen, Norway\\
{\tt   Nima.Ghanbari@uib.no}
\end{center}


\begin{abstract}
Let $G=(V,E)$ be a simple graph. A dominating set of $G$ is a subset $S\subseteq V$ such that every vertex not in $S$ is adjacent to at least one vertex in $S$.
The cardinality of a smallest dominating set of $G$, denoted by $\gamma(G)$, is the domination number of $G$. A dominating set $S$ is called a super dominating set of $G$, if for every vertex  $u\in \overline{S}=V-S$, there exists $v\in S$ such that $N(v)\cap \overline{S}=\{u\}$. The cardinality of a smallest super dominating set of $G$, denoted by $\gamma_{sp}(G)$, is the super domination number of $G$. In this paper, we study super domination number of some graph classes and present sharp bounds for some graph operations. 
\end{abstract}

\noindent{\bf Keywords:}  domination number, super dominating set, edge removal, contraction.

\medskip
\noindent{\bf AMS Subj.\ Class.:} 05C69, 05C76

\section{Introduction}
 
Let $G = (V,E)$ be a  graph with vertex set $V$ and edge set $E$. Throughout this paper, we consider graphs without loops and directed edges.
For each vertex $v\in V$, the set $N(v)=\{u\in V | uv \in E\}$ refers to the open neighbourhood of $v$ and the set $N[v]=N(v)\cup \{v\}$ refers to the closed neighbourhood of $v$ in $G$. The degree of $v$, denoted by $\deg(v)$, is the cardinality of $N(v)$.
 A set $S\subseteq V$ is a  dominating set if every vertex in $\overline{S}= V- S$ is adjacent to at least one vertex in $S$.
The  domination number $\gamma(G)$ is the minimum cardinality of a dominating set in $G$. There are various domination numbers in the literature.
For a detailed treatment of domination theory, the reader is referred to \cite{domination}. 

In 2015, Lema\'nska et al. introduced the concept of super domination number \cite{Lemans}. By their definition, a dominating set $S$ is called a super dominating set of $G$, if for every vertex  $u\in \overline{S}$, there exists $v\in S$ such that $N(v)\cap \overline{S}=\{u\}$. The cardinality of a smallest super dominating set of $G$, denoted by $\gamma_{sp}(G)$, is the super domination number of $G$. They obtained super domination number of some specific graphs such as path graphs, cycle graphs, complete graphs and complete bipartite graphs. Also they studied Nordhaus-Gaddum type results for this kind of domination number and found sharp bounds for that. 

Super domination number is studied in literature. In 2016, Krishnakumari et al. found lower bound of super domination number of trees and characterize the trees attaining that bound \cite{Kri}. Later Alfarisi et al. investigated super domination number of unicyclic graphs in 2018 \cite{Alf}. In 2019, Dettlaff et  al. obtained closed formulas and tight bounds for the super dominating number of lexicographic product graphs in terms of invariants of the factor graphs involved in the product \cite{Dett}. Also they showed that the problem of finding the super domination number of a graph is NP-Hard. In the same year, Klein et al. obtained results on the super domination number of corona product graphs and Cartesian product graphs \cite{Kle}. In 2022, Zhuang investigated the ratios between
super domination and other domination parameters in trees \cite{Zhu}.

In this paper, we continue the study of super domination number of a graph. In Section 2, we     investigate super domination number of some  graph classes. In Section 3, we remind the definition of edge removal and contraction and obtain sharp upper and lower bound for super domination number of a graph by this modification. Finally, we mention the definition of  vertex removal and contraction and find sharp bounds for the super domination number of a graph which modified by these operations in Section 4.

\section{Super domination number of some graph classes}

In this section, we study the super domination of some special graph classes. First, we state some known results.

	\begin{theorem}\cite{Lemans}\label{thm-1}
 Let $G$ be a graph of order $n$ which is not empty graph. Then,
 $$1\leq \gamma(G) \leq \frac{n}{2} \leq \gamma_{sp}(G) \leq n-1.$$
	\end{theorem}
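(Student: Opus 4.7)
The plan is to verify each of the four inequalities in the chain separately, since they are essentially independent.

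The two extreme inequalities are immediate. For $1\leq \gamma(G)$: any nonempty vertex set needs at least one dominator, so $\gamma(G)\geq 1$. For $\gamma_{sp}(G)\leq n-1$: since $G$ is not the empty graph it has an edge $uw$; taking $S=V\setminus\{w\}$ gives $\overline{S}=\{w\}$, and $u\in S$ witnesses $N(u)\cap\overline{S}=\{w\}$, so $S$ is super dominating of size $n-1$.

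For $\gamma(G)\leq n/2$, I would invoke Ore's classical theorem: in a graph without isolated vertices, the complement of any minimum dominating set $D$ is also dominating, hence $|D|\leq n-|D|$. The short self-contained argument is that each $v\in D$ must have a neighbour in $V\setminus D$, for otherwise $D\setminus\{v\}$ would still dominate (its neighbours in $D$ already dominate $v$), contradicting minimality; thus $V\setminus D$ dominates all of $D$ and the bound follows. (The hypothesis that $G$ is not empty is what rules out the problematic $K_1$-type cases.)

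The most substantive step is $n/2\leq \gamma_{sp}(G)$, and this is where the super-domination hypothesis genuinely enters. I would let $S$ be a minimum super dominating set and, for each $u\in\overline{S}$, choose a witness $v_u\in S$ with $N(v_u)\cap\overline{S}=\{u\}$. The key observation is that the assignment $u\mapsto v_u$ is injective: if $v_{u_1}=v_{u_2}=v$, then
$$\{u_1\}=N(v)\cap\overline{S}=\{u_2\},$$
forcing $u_1=u_2$. This embeds $\overline{S}$ into $S$, giving $|\overline{S}|\leq|S|$ and hence $\gamma_{sp}(G)=|S|\geq n/2$. I do not anticipate any real obstacle here; the injection is essentially forced by the definition and is the only place the super-domination condition is used nontrivially.
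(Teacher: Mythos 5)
This statement is quoted from the cited source \cite{Lemans}; the paper itself gives no proof, so there is nothing internal to compare against. Your argument is the standard one and its core is sound: the injection $u\mapsto v_u$ from $\overline{S}$ into $S$ is exactly the right (and essentially the only) way to get $n/2\leq\gamma_{sp}(G)$, and your $S=V\setminus\{w\}$ construction for the upper bound is correct.

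The one genuine slip is the parenthetical claim that the hypothesis ``$G$ is not the empty graph'' rules out the problematic cases for $\gamma(G)\leq n/2$. It does not: non-emptiness only guarantees that \emph{some} edge exists, not that every vertex has a neighbour. For $G=K_2\cup K_1$ one has $n=3$ and $\gamma(G)=2>n/2$, so the middle inequality fails, and your own minimality argument breaks at an isolated vertex $v\in D$, since then $D\setminus\{v\}$ no longer dominates $v$. Ore's bound, and hence the chain as literally stated, requires the stronger hypothesis that $G$ has no isolated vertices (which is how the result is stated in the original source). This is really an imprecision inherited from the theorem statement rather than a defect of your strategy, but your proof should either assume no isolated vertices explicitly or note that the bound $\gamma(G)\leq n/2$ is the step where that assumption is genuinely needed; the inequality $n/2\leq\gamma_{sp}(G)$, by contrast, survives isolated vertices unchanged, since each such vertex is forced into $S$ and your injection argument is unaffected.
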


	\begin{theorem}\cite{Lemans}\label{thm-2}
	\begin{itemize}
	\item[(i)]
	For a path graph $P_n$ with $n\geq 3$, $\gamma_{sp}(P_n)=\lceil \frac{n}{2} \rceil$.
	\item[(ii)]
	For a cycle graph $C_n$,
	\begin{displaymath}
\gamma_{sp}(C_n)= \left\{ \begin{array}{ll}
\lceil\frac{n}{2}\rceil & \textrm{if $n \equiv 0, 3 ~ (mod ~ 4)$, }\\
\\
\lceil\frac{n+1}{2}\rceil & \textrm{otherwise.}
\end{array} \right.
\end{displaymath}
	\end{itemize}
	\end{theorem}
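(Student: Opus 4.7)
\medskip

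\noindent\textbf{Proof plan.} The plan is to establish matching lower and upper bounds in each case. Theorem~\ref{thm-1} provides the generic lower bound $\gamma_{sp}(G)\ge n/2$, and integrality sharpens this to $\gamma_{sp}(G)\ge\lceil n/2\rceil$; this suffices for every part of the statement \emph{except} $C_n$ with $n\equiv 2\pmod 4$, where one must strictly beat $\lceil n/2\rceil$.

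For part~(i), on $P_n$ I take the explicit set $S=\{v_i:1\le i\le n,\ i\equiv 0\text{ or }1\pmod 4\}$, augmented by $\{v_n\}$ in the single case $n\equiv 3\pmod 4$. A direct count gives $|S|=\lceil n/2\rceil$ in every residue class. To verify super-domination, note that $\overline{S}$ decomposes into consecutive pairs $\{v_{4i+2},v_{4i+3}\}$ together with at most one boundary singleton. Each $v_{4i+1}\in S$ has its other path-neighbour $v_{4i}$ in $S$ (or is itself the left endpoint), so $N(v_{4i+1})\cap\overline{S}=\{v_{4i+2}\}$; symmetrically $v_{4i+4}$ privately dominates $v_{4i+3}$, and the boundary singleton is handled by the adjusted endpoint.

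For part~(ii) I wrap the same idea around the cycle. Writing the membership of $v_1,v_2,\dots,v_n$ as a string in which `$S$' denotes a vertex in $S$ and `$O$' a vertex in $\overline{S}$, take the cyclic patterns $(SSOO)^k$, $(SSOO)^kS$, $(SSOO)^kSS$, $(SSOO)^kSSO$ for $n=4k,\,4k+1,\,4k+2,\,4k+3$ respectively. The corresponding cardinalities $n/2,\,(n+1)/2,\,n/2+1,\,(n+1)/2$ match the claimed values, and private domination is checked locally just as for the path: every $\overline{S}$-run has length $1$ or $2$ and is flanked on at least one side by an $S$-run of length $\ge 2$, giving an adjacent $S$-vertex with a unique $\overline{S}$-neighbour.

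The real obstacle is the sharpened lower bound $\gamma_{sp}(C_n)\ge n/2+1$ when $n=4k+2$. I would argue by a parity trick on the private-dominator map $f\colon\overline{S}\to S$, defined by choosing for each $u\in\overline{S}$ a vertex $f(u)\in S$ with $N(f(u))\cap\overline{S}=\{u\}$ (which exists by the definition of super-domination). This map is injective, since $f(u)=f(u')=v$ would force both $u$ and $u'$ into the singleton $N(v)\cap\overline{S}$. If $|S|=n/2$ then $|\overline{S}|=n/2$ also, so $f$ is a bijection and every $v\in S$ has $|N(v)\cap\overline{S}|=1$; since $C_n$ is $2$-regular this also forces $|N(v)\cap S|=1$, so the maximal $S$-runs along the cycle all have length exactly $2$. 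Hence $|S|$ is even, contradicting $|S|=2k+1$.

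\medskip
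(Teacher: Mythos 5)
The paper does not prove this statement at all: Theorem~\ref{thm-2} is imported verbatim from Lema\'nska et al.~\cite{Lemans} and used as a black box, so there is no internal proof to compare against. Your argument is a correct, self-contained proof. The upper bounds via the explicit $(SSOO)$-type patterns check out in every residue class (I verified the counts and the private-dominator assignments, including the boundary adjustments for $n\equiv 3\pmod 4$ on the path and the cyclic wrap-around on $C_n$), and the generic bound $\gamma_{sp}\ge\lceil n/2\rceil$ from Theorem~\ref{thm-1} does settle everything except $C_{4k+2}$. The parity argument for that remaining case is the genuine content of part~(ii), and it is sound: injectivity of the private-dominator map plus $|S|=|\overline{S}|$ forces it to be a bijection, $2$-regularity then makes $C_n[S]$ $1$-regular, so $|S|$ must be even while $n/2=2k+1$ is odd. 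Two small wording points, neither a gap: when you verify the cycle patterns you say each $\overline{S}$-run of length $2$ is flanked ``on at least one side'' by an $S$-run of length $\ge 2$, but a length-$2$ run of $\overline{S}$ needs such a flank on \emph{both} sides (each of its two vertices draws its private dominator from its own side); this holds in all of your patterns since every $S$-run there has length $\ge 2$, so the construction is fine even though the stated sufficient condition is slightly too weak. Also, it would be worth one sentence noting that for odd $n$ the two displayed values $\lceil n/2\rceil$ and $\lceil (n+1)/2\rceil$ coincide, which is why $n\equiv 1\pmod 4$ needs no sharpened lower bound --- you use this implicitly.
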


	\begin{theorem}\cite{Lemans}\label{thm-3}
\begin{itemize}
\item[(i)]
For the complete graph $K_n$, $\gamma_{sp}(K_n)=n-1$.
\item[(ii)]
For the complete bipartite graph $K_{n,m}$, $\gamma_{sp}(K_{n,m})=n+m-2$.
\end{itemize}
	\end{theorem}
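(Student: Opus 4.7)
Both parts fit the same template: exhibit a super dominating set of the claimed size (upper bound) and show that no smaller set can satisfy the private-neighbor condition that appears in the definition of super domination (lower bound). Both halves are driven directly by the edge structure of $K_n$ and $K_{n,m}$, so the argument should be essentially a matter of inspecting how $N(v)\cap \overline{S}$ behaves.

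For part (i), I would begin with the upper bound by taking $S$ to be any $n-1$ vertices of $K_n$; then $\overline{S}$ consists of a single vertex $u$, and every $v\in S$ satisfies $N(v)\cap \overline{S}=\{u\}$ because $K_n$ is complete, so $S$ is super dominating. For the matching lower bound, I would observe that if $|S|\le n-2$ then $|\overline{S}|\ge 2$, and for any $v\in S$ completeness forces $N(v)\cap \overline{S}=\overline{S}$, which has at least two elements and therefore cannot be a singleton. Combined with the trivial upper bound $\gamma_{sp}(K_n)\le n-1$ already available from Theorem~\ref{thm-1}, this pins down the value.

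For part (ii), let $A$ and $B$ be the parts of $K_{n,m}$ with $|A|=n$ and $|B|=m$; I would treat the case $n,m\ge 2$, since the star case $\min(n,m)=1$ is not actually covered by the stated formula. The upper bound construction would be $S=(A\setminus\{a_0\})\cup(B\setminus\{b_0\})$ for any choice of $a_0\in A$ and $b_0\in B$, so that $\overline{S}=\{a_0,b_0\}$; any $v\in B\cap S$ witnesses the private-neighbor condition for $a_0$ because $N(v)=A$, and symmetrically for $b_0$. This is exactly where the hypothesis $n,m\ge 2$ is used, to guarantee that $B\cap S$ and $A\cap S$ are both nonempty.

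The lower bound is the only step with any real content. Given a super dominating $S$, I would analyze each side of $\overline{S}$ separately: for $u\in \overline{S}\cap A$, a private neighbor $v$ of $u$ must be adjacent to $u$ and hence lies in $B\cap S$, but then $N(v)\cap \overline{S}=\overline{S}\cap A$, which forces $\overline{S}\cap A=\{u\}$. The identical argument on the $B$-side yields $|\overline{S}\cap B|\le 1$, hence $|\overline{S}|\le 2$ and $|S|\ge n+m-2$. I do not anticipate a real obstacle; the main subtlety to keep in mind is that because the graph is bipartite, the private-neighbor condition couples each side of $\overline{S}$ only to the opposite side of $S$, which is precisely what lets each side of $\overline{S}$ be constrained independently.
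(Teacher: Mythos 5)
Your proposal is correct, but note that the paper offers no proof of this statement to compare against: it is quoted verbatim from \cite{Lemans} as a known result, and your two-step argument (exhibit a set with complement of size $1$, resp.\ $2$, then use completeness to show $N(v)\cap\overline{S}$ is all of $\overline{S}$, resp.\ all of $\overline{S}$ on one side, to bound $|\overline{S}|$) is exactly the standard one. Your side remark that part (ii) needs $n,m\ge 2$ is also accurate — for the star one checks $\gamma_{sp}(K_{1,m})=m\neq m-1$ — a hypothesis the paper's statement omits.
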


Let $n$ be any positive integer and  $F_n$ be friendship graph with
$2n + 1$ vertices and $3n$ edges. In other words, the friendship  graph $F_n$ is a graph that can be constructed by coalescence $n$
copies of the cycle graph $C_3$ of length $3$ with a common vertex. The Friendship Theorem of  Erd\H os,
R\'{e}nyi and S\'{o}s \cite{erdos}, states that graphs with the property that every two vertices have
exactly one neighbour in common are exactly the friendship graphs.
The Figure \ref{friend} shows some examples of friendship graphs. Here we shall investigate the super domination number of friendship graphs.

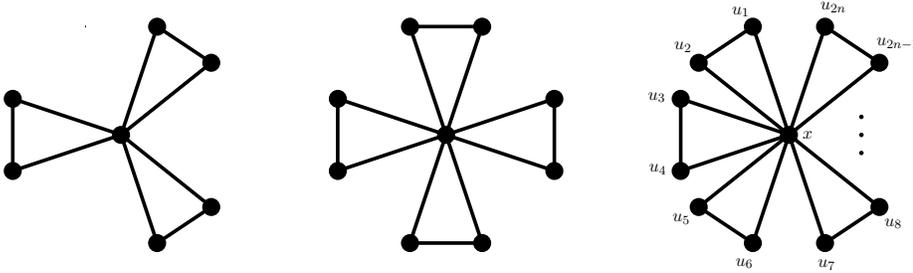
\begin{figure}
\begin{center}
\psscalebox{0.6 0.6}
{
\begin{pspicture}(0,-7.215)(20.277115,-1.245)
\psline[linecolor=black, linewidth=0.04](1.7971154,-1.815)(1.7971154,-1.815)
\psdots[linecolor=black, dotsize=0.4](8.997115,-1.815)
\psdots[linecolor=black, dotsize=0.4](10.5971155,-1.815)
\psdots[linecolor=black, dotsize=0.4](9.797115,-4.215)
\psdots[linecolor=black, dotsize=0.4](8.997115,-6.615)
\psdots[linecolor=black, dotsize=0.4](10.5971155,-6.615)
\psline[linecolor=black, linewidth=0.08](8.997115,-1.815)(10.5971155,-1.815)(8.997115,-6.615)(10.5971155,-6.615)(8.997115,-1.815)(8.997115,-1.815)
\psdots[linecolor=black, dotsize=0.4](12.197115,-3.415)
\psdots[linecolor=black, dotsize=0.4](12.197115,-5.015)
\psdots[linecolor=black, dotsize=0.4](7.397115,-3.415)
\psdots[linecolor=black, dotsize=0.4](7.397115,-5.015)
\psline[linecolor=black, linewidth=0.08](12.197115,-5.015)(7.397115,-3.415)(7.397115,-5.015)(12.197115,-3.415)(12.197115,-5.015)(12.197115,-5.015)
\psdots[linecolor=black, dotsize=0.4](0.1971154,-3.415)
\psdots[linecolor=black, dotsize=0.4](0.1971154,-5.015)
\psdots[linecolor=black, dotsize=0.4](2.5971155,-4.215)
\psline[linecolor=black, linewidth=0.08](2.5971155,-4.215)(0.1971154,-3.415)(0.1971154,-5.015)(2.5971155,-4.215)(2.5971155,-4.215)
\psdots[linecolor=black, dotsize=0.4](3.3971155,-1.815)
\psdots[linecolor=black, dotsize=0.4](4.5971155,-2.615)
\psdots[linecolor=black, dotsize=0.4](3.3971155,-6.615)
\psdots[linecolor=black, dotsize=0.4](4.5971155,-5.815)
\psline[linecolor=black, linewidth=0.08](2.5971155,-4.215)(4.5971155,-5.815)(3.3971155,-6.615)(3.3971155,-6.615)
\psline[linecolor=black, linewidth=0.08](2.5971155,-4.215)(3.3971155,-6.615)(3.3971155,-6.615)
\psline[linecolor=black, linewidth=0.08](2.5971155,-4.215)(3.3971155,-1.815)(4.5971155,-2.615)(2.5971155,-4.215)(2.5971155,-4.215)
\psdots[linecolor=black, dotsize=0.4](15.397116,-2.615)
\psdots[linecolor=black, dotsize=0.4](16.597115,-1.815)
\psdots[linecolor=black, dotsize=0.4](17.397116,-4.215)
\psdots[linecolor=black, dotsize=0.4](15.397116,-5.815)
\psdots[linecolor=black, dotsize=0.4](16.597115,-6.615)
\psdots[linecolor=black, dotsize=0.4](19.397116,-5.815)
\psdots[linecolor=black, dotsize=0.4](18.197115,-6.615)
\psdots[linecolor=black, dotsize=0.4](14.997115,-3.415)
\psdots[linecolor=black, dotsize=0.4](14.997115,-5.015)
\psdots[linecolor=black, dotsize=0.4](18.197115,-1.815)
\psdots[linecolor=black, dotsize=0.4](19.397116,-2.615)
\psdots[linecolor=black, dotsize=0.1](18.997116,-3.815)
\psdots[linecolor=black, dotsize=0.1](18.997116,-4.215)
\psdots[linecolor=black, dotsize=0.1](18.997116,-4.615)
\rput[bl](17.697115,-4.295){$x$}
\rput[bl](16.137115,-1.595){$u_1$}
\rput[bl](14.857116,-2.375){$u_2$}
\psline[linecolor=black, linewidth=0.08](17.397116,-4.215)(19.397116,-2.615)(18.197115,-1.815)(17.397116,-4.215)(16.597115,-1.815)(15.397116,-2.615)(17.397116,-4.215)(14.997115,-3.415)(14.997115,-5.015)(17.397116,-4.215)(15.397116,-5.815)(16.597115,-6.615)(17.397116,-4.215)(18.197115,-6.615)(19.397116,-5.815)(17.397116,-4.215)(17.397116,-4.215)
\rput[bl](14.277116,-3.495){$u_3$}
\rput[bl](14.297115,-5.095){$u_4$}
\rput[bl](14.817116,-6.195){$u_5$}
\rput[bl](16.217115,-7.175){$u_6$}
\rput[bl](18.037115,-7.215){$u_7$}
\rput[bl](19.517115,-6.295){$u_8$}
\rput[bl](18.097115,-1.495){$u_{2n}$}
\rput[bl](19.337116,-2.315){$u_{2n-1}$}
\end{pspicture}
}
\end{center}
\caption{Friendship graphs $F_3$, $F_4$ and $F_n$, respectively.}\label{friend}
\end{figure}

	\begin{theorem}\label{Firend-thm}
 For the friendship graph $F_n$, we have $\gamma_{sp}(F_n)=n+1$.
	\end{theorem}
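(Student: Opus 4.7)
The plan is to get the lower bound for free from Theorem \ref{thm-1} and then exhibit an explicit super dominating set of size $n+1$ realising it.

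For the lower bound, observe that $F_n$ has order $N=2n+1$, so Theorem \ref{thm-1} immediately yields
$$\gamma_{sp}(F_n)\;\geq\;\left\lceil\frac{2n+1}{2}\right\rceil\;=\;n+1,$$
using that $\gamma_{sp}$ is an integer. This dispatches the harder direction with no work.

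For the upper bound, I would take $S=\{x,u_1,u_3,u_5,\ldots,u_{2n-1}\}$, consisting of the centre $x$ together with one vertex from each triangle. Then $|S|=n+1$ and $\overline{S}=\{u_2,u_4,\ldots,u_{2n}\}$. Clearly $S$ dominates $F_n$, since every $u_{2i}\in\overline{S}$ is adjacent to $u_{2i-1}\in S$ (and also to $x\in S$). To see that $S$ is super dominating, note that for each $u_{2i}\in\overline{S}$ the vertex $u_{2i-1}\in S$ has open neighbourhood $N(u_{2i-1})=\{x,u_{2i}\}$ in $F_n$; since $x\in S$, we obtain $N(u_{2i-1})\cap\overline{S}=\{u_{2i}\}$, which is exactly the required private-neighbour condition. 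Hence $\gamma_{sp}(F_n)\leq n+1$.

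Combining the two bounds gives $\gamma_{sp}(F_n)=n+1$. There is essentially no obstacle here: the only subtle point is recognising that Theorem \ref{thm-1}, applied to a graph of odd order $2n+1$, already forces $\gamma_{sp}\geq n+1$, so that the construction of a single witness set suffices. The witness set is natural: one must include $x$ (otherwise no vertex of $S$ can privately dominate $x$, since every non-central vertex $u_{2i-1}$ always sees both $x$ and its triangle partner $u_{2i}$), and then a transversal of the $n$ leaf-pairs gives exactly $n+1$ vertices.
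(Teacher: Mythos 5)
Your proof is correct and follows essentially the same route as the paper: the lower bound comes from Theorem \ref{thm-1} applied to the order $2n+1$ together with integrality, and the upper bound uses the same witness set $\{x,u_1,u_3,\ldots,u_{2n-1}\}$ (you merely spell out the private-neighbour verification that the paper leaves as ``one can easily check''). No issues.
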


	\begin{proof}
Consider to the friendship graph $F_n$, as shown in Figure \ref{friend},  with the vertex set $V(F_n)=\{x,u_1,u_2,u_3,u_4,\ldots,u_{2n-1},u_{2n}\}$. Let $D=\{x,u_1,u_3,u_5,\ldots,u_{2n-1}\}$. One can easily check that $D$ is a super dominating set for $F_n$. So $\gamma_{sp}(F_n)\leq n+1$. Now by Theorem \ref{thm-1}, we have $\gamma_{sp}(F_n)\geq n+\frac{1}{2}$. Therefore $\gamma_{sp}(F_n)=n+1$ and we have the result. 
\qed
	\end{proof}

Friendship graphs are a special case of Dutch windmill graphs. The Dutch windmill graph $D_n^{(m)}$ is a graph that can be constructed by coalescence $n$
copies of the cycle graph $C_m$ of length $m$ with a common vertex. Clearly, $D_n^{(3)}=F_n$. Here we state an upper bound for super domination number of these graphs.

		\begin{theorem}\label{Dutch}
For the Dutch windmill graph $D_n^{(m)}$, $m\geq 4$, we have 
$$\gamma_{sp}(D_n^{(m)})\leq n\lceil \frac{m-1}{2} \rceil +1.$$
	\end{theorem}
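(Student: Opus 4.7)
My approach is to exhibit an explicit super dominating set of size $n\lceil (m-1)/2\rceil + 1$. Label the vertices so that $x$ is the common vertex and the $i$-th copy of $C_m$ is the cycle $x, v_{i,1}, v_{i,2}, \ldots, v_{i,m-1}, x$. In each blade $i$ I will select the vertices whose index is $\equiv 2$ or $3 \pmod{4}$, together with the terminal vertex $v_{i,m-1}$ in the one residue class where that pairing pattern fails at the end, namely $m\equiv 2\pmod{4}$:
\[
S'_i := \{v_{i,j} : 1\le j\le m-1,\ j\equiv 2 \text{ or } 3\pmod 4\} \cup \begin{cases} \{v_{i,m-1}\} & \text{if } m\equiv 2\pmod 4,\\ \emptyset & \text{otherwise.}\end{cases}
\]
Set $S := \{x\}\cup\bigcup_{i=1}^n S'_i$. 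A short tally by the residue of $m-1$ modulo $4$ confirms that $|S'_i|=\lceil (m-1)/2\rceil$ in each case, hence $|S|=n\lceil (m-1)/2\rceil + 1$.

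To verify that $S$ is super dominating, it suffices to supply a private neighbor for every $v_{i,j}\in\overline S$ within blade $i$, since $x\in S$ and each vertex of $S'_i$ has all its non-$x$ neighbors inside blade $i$. The driving observation is that every selected pair $v_{i,4k+2}, v_{i,4k+3}$ mutually shields itself, so that
\[
N(v_{i,4k+2})\cap\overline S = \{v_{i,4k+1}\} \quad\text{and}\quad N(v_{i,4k+3})\cap\overline S = \{v_{i,4k+4}\}
\]
whenever the relevant indices lie in $\{1,\ldots,m-1\}$. This yields a private dominator for every interior omitted vertex $v_{i,j}$ with $j\equiv 0$ or $1\pmod 4$, including the near-end vertex $v_{i,1}$, which is privately dominated by $v_{i,2}$.

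The main obstacle is the far end of each blade, which requires a case split on $m\pmod 4$. For $m\equiv 0\pmod 4$ the final pair $v_{i,m-3}, v_{i,m-2}$ closes the count with no leftover vertex. For $m\equiv 1\pmod 4$ the terminal omitted vertex $v_{i,m-1}$ is privately dominated by $v_{i,m-2}\in S'_i$, whose other blade-neighbor $v_{i,m-3}$ also lies in $S'_i$. For $m\equiv 3\pmod 4$ the vertex $v_{i,m-1}\in S'_i$ itself privately dominates $v_{i,m-2}$, using that its remaining cycle-neighbor is $x\in S$. The delicate case is $m\equiv 2\pmod 4$: the natural pattern leaves $v_{i,m-1}$ with no private dominator from within the blade, while $x$ cannot serve in that role since it has non-$S$ neighbors coming from every other blade as well; this is exactly why the definition of $S'_i$ absorbs $v_{i,m-1}$ as an extra element in this residue class. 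Once these four boundary checks are made explicit, the super-domination property is established and the claimed upper bound follows.
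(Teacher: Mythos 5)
Your construction is correct and follows essentially the same route as the paper: both take the common vertex $x$ together with, in each blade, a super dominating set of size $\lceil \frac{m-1}{2}\rceil$ for the path induced on the blade minus $x$ (the paper simply cites its Theorem on $\gamma_{sp}(P_{m-1})$ for existence, while you exhibit such a set explicitly via the mod-$4$ pattern and check the boundary cases by hand). The only blemish is a harmless indexing slip in your $m\equiv 0\pmod 4$ discussion: the final selected pair there is $v_{i,m-2},v_{i,m-1}$ (residues $2$ and $3$), not $v_{i,m-3},v_{i,m-2}$, but the set as you defined it still privately dominates every omitted vertex in that case.
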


	\begin{proof}
Suppose that the common vertex is $x$ and the $i$-th copy of cycle graph in $D_n^{(m)}$ is $C_m^{(i)}$. We make a dominating set $D$ for this graph and claim that it is a super dominating set. First, we put $x$ in $D$. Now for every $C_m^{(i)}$, $1\leq i\leq n$, we remove vertex $x$. Therefore we have a path graph of order $m-1\geq 3$. By Theorem \ref{thm-2}, we know that there is a super dominating set for this path of size $\lceil \frac{m-1}{2} \rceil$. Therefore, we put all these vertices in $D$. Clearly $D$ is a super dominating set for $D_n^{(m)}$ with size $n\lceil \frac{m-1}{2} \rceil +1$ and we have the result.
\qed
	\end{proof}

	\begin{remark}\label{remark1}
The upper bound in Theorem \ref{Dutch} is sharp.  It suffices to consider $m\geq 5$ as an odd number. Then $\gamma_{sp}(D_n^{(m)})\leq n \left( \frac{m-1}{2}\right)  +1$. On the other hand, by Theorem \ref{thm-1}, $\gamma_{sp}(D_n^{(m)})\geq \frac{n(m-1)+1}{2}$. So we have $\gamma_{sp}(D_n^{(m)})= n\lceil \frac{m-1}{2} \rceil +1.$
	\end{remark}

As we see in Remark \ref{remark1}, when $m\geq 5$ is an odd number,  then $\gamma_{sp}(D_n^{(m)})= n\lceil \frac{m-1}{2} \rceil +1.$
We end this section by the following example which shows that it is not true for even number $m$ and there are some Dutch windmill graphs $D_n^{(m)}$ which have super domination number less than $n\lceil \frac{m-1}{2} \rceil +1$.

	\begin{example}
 Consider to the graph $D_2^{(8)}$ as shown in Figure \ref{D28}. 
By Theorem \ref{thm-1},  $\gamma_{sp}(D_2^{(8)})\geq \frac{15}{2}$. 
 One can easily check that the set of black vertices is a super dominating set of $D_2^{(8)}$ with size 8. So $\gamma_{sp}(D_2^{(8)})= 8< 2\lceil \frac{8-1}{2} \rceil +1$.
	\end{example}

	\begin{figure}
\begin{center}
\psscalebox{0.6 0.6}
{
\begin{pspicture}(0,-6.0)(10.002778,-3.1972222)
\psdots[linecolor=black, dotsize=0.4](6.201389,-3.398611)
\psdots[linecolor=black, dotsize=0.4](7.4013886,-3.398611)
\psdots[linecolor=black, dotsize=0.4](8.601389,-5.798611)
\psdots[linecolor=black, dotsize=0.4](7.4013886,-5.798611)
\psdots[linecolor=black, dotsize=0.4](1.4013889,-3.398611)
\psdots[linecolor=black, dotsize=0.4](2.601389,-3.398611)
\psdots[linecolor=black, dotsize=0.4](3.8013887,-5.798611)
\psdots[linecolor=black, dotsize=0.4](2.601389,-5.798611)
\psline[linecolor=black, linewidth=0.08](0.20138885,-4.598611)(1.4013889,-3.398611)(3.8013887,-3.398611)(6.201389,-5.798611)(8.601389,-5.798611)(9.801389,-4.598611)(8.601389,-3.398611)(6.201389,-3.398611)(3.8013887,-5.798611)(1.4013889,-5.798611)(0.20138885,-4.598611)(0.20138885,-4.598611)
\psdots[linecolor=black, dotstyle=o, dotsize=0.4, fillcolor=white](5.001389,-4.598611)
\psdots[linecolor=black, dotstyle=o, dotsize=0.4, fillcolor=white](8.601389,-3.398611)
\psdots[linecolor=black, dotstyle=o, dotsize=0.4, fillcolor=white](9.801389,-4.598611)
\psdots[linecolor=black, dotstyle=o, dotsize=0.4, fillcolor=white](6.201389,-5.798611)
\psdots[linecolor=black, dotstyle=o, dotsize=0.4, fillcolor=white](3.8013887,-3.398611)
\psdots[linecolor=black, dotstyle=o, dotsize=0.4, fillcolor=white](0.20138885,-4.598611)
\psdots[linecolor=black, dotstyle=o, dotsize=0.4, fillcolor=white](1.4013889,-5.798611)
\end{pspicture}
}
\end{center}
\caption{Dutch windmill graph $D_2^{(8)}$}\label{D28}
\end{figure}
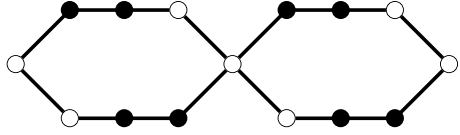

\section{Super domination number of $G-e$ and $G/e$}

The graph $G-e$ is a graph that obtained from $G$ by simply removing the  edge $e$.
In a graph $G$, contraction of an edge $e$ with endpoints $u,v$ is the replacement of $u$ and $v$ with a single vertex such that edges incident to the new vertex are the edges other than $e$ that were incident with $u$ or $v$. The resulting graph $G/e$ has one less edge than $G$ (\cite{Bondy}). We denote this graph by $G/e$. We refer the reader for more results about $G/e$ to \cite{Nima,Nima1}.
In this section we  examine the effects on $\gamma _{sp}(G)$ when $G$ is modified by an edge removal and edge contraction.
 First, we consider to the edge removal of a graph and find upper  and lower bound for super domination number of that.

	\begin{theorem}\label{G-e}
 Let $G=(V,E)$ be a graph and $e=uv\in E$. Then,
 $$\gamma_{sp} (G)-1\leq \gamma_{sp}(G-e)\leq \gamma_{sp} (G)+1.$$
	\end{theorem}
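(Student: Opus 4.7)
Both inequalities follow from the same lemma: a minimum super dominating set of one of $G,G-e$ can be turned into a super dominating set of the other by adding at most one vertex. I would give the argument for the upper bound $\gamma_{sp}(G-e)\le\gamma_{sp}(G)+1$ and obtain the lower bound by running the symmetric construction with $G$ and $G-e$ interchanged.

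Let $S$ be a super dominating set of $G$ of minimum size and split into cases by the position of $u,v$. If $\{u,v\}\subseteq S$ or $\{u,v\}\subseteq\overline{S}$, then $S$ itself remains super dominating in $G-e$, because for every $w'\in S$ the intersection $N(w')\cap\overline{S}$ is unchanged by deleting $e$: either $w'\notin\{u,v\}$ and the edge does not touch $w'$, or $w'\in\{u,v\}$ but the opposite endpoint lies in $S$ and so never appeared in $N(w')\cap\overline{S}$. The dominating property survives for the same reason.

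The case that costs one extra vertex is when exactly one endpoint of $e$, say $u$, lies in $S$. I would put $S'=S\cup\{v\}$, so $\overline{S'}=\overline{S}\setminus\{v\}$, and check the super-domination condition for $S'$ in $G-e$. For $w\in\overline{S'}$ with $G$-private partner $w'\in S$, one has to verify $N_{G-e}(w')\cap\overline{S'}=\{w\}$. If $w'\neq u$ this is immediate from $N_{G-e}(w')=N_G(w')$ and $w\neq v$. The delicate subcase is $w'=u$, where $N_G(u)\cap\overline{S}=\{w\}$ together with $v\in N_G(u)\cap\overline{S}$ forces $w=v$, contradicting $w\in\overline{S'}$; hence $u$ is never anyone's private partner in $G-e$ and the condition is automatic. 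This gives $|S'|=\gamma_{sp}(G)+1$, proving the upper bound.

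For the lower bound I would start from a minimum super dominating set of $G-e$ and insert the edge; the same case analysis works verbatim, with the roles of $N_G$ and $N_{G-e}$ exchanged, and adding the outside endpoint to the set absorbs the single new neighbor that the restored edge creates. The main obstacle in both directions is precisely the subcase $w'=u$ (respectively its mirror image): this is the only place where the super-domination condition could fail after modifying $S$, and it is resolved by the forced equality $w=v$ coming from the singleton constraint on $N(u)\cap\overline{S}$.
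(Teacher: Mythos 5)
Your proposal is correct and follows essentially the same route as the paper: a case analysis on how the endpoints $u,v$ sit relative to a minimum super dominating set, with the only costly case being exactly one endpoint inside the set, resolved by adding the other endpoint. Your treatment of the subcase $w'=u$ (showing $u$ cannot be a private partner of any vertex of $\overline{S'}$, resp.\ that the new neighbour $v$ is absorbed into $S'$) is in fact slightly more explicit than the paper's, but the construction and the bounds obtained are identical.
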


	\begin{proof}
First we find the upper bound for $\gamma_{sp}(G-e)$. Suppose that $D$ is a super dominating set for $G$. We have the following cases:
\begin{itemize}
\item[(i)]
$u,v\in D$. In this case, removing $e$ does not have effect on $D$ and it is a super dominating set for $G-e$ too. So $\gamma_{sp}(G-e)\leq \gamma_{sp} (G)$.
\item[(ii)]
$u,v\notin D$. It is similar to case (i).
\item[(iii)]
$u\in D$ and $v\notin D$. If $u$ is not the vertex such that $N(u)\cap \overline{D} = \{v\}$, then it is similar to case (i). Now suppose that $u$ is the vertex with $N(u)\cap \overline{D} = \{v\}$. Let $D'=D\cup\{v\}$. Clearly $D'$ is a super dominating set for $G-e$. So  $\gamma_{sp}(G-e)\leq \gamma_{sp} (G)+1$.
\end{itemize}
Therefore we have $\gamma_{sp}(G-e)\leq \gamma_{sp} (G)+1$. Now we find a lower bound for $\gamma_{sp}(G-e)$. Suppose that $S$ is a super dominating set for $G-e$. We have the following cases:
\begin{itemize}
\item[(i)]
$u,v\in S$. In this case, $S$ is a super dominating set for $G$ too. So $\gamma_{sp} (G)\leq \gamma_{sp}(G-e)$.
\item[(ii)]
$u,v\notin S$. Then, there are vertices $w,x\in S$ such that $N(w)\cap \overline{S} = \{u\}$ and $N(x)\cap \overline{S} = \{v\}$. Clearly $w \neq x$. So $S$ is a super dominating set for $G$ too, and $\gamma_{sp} (G)\leq \gamma_{sp}(G-e)$.
\item[(iii)]
$u\in S$ and $v\notin S$. In this case, there might be a vertex $s\in \overline{S}$ such that $N(u)\cap \overline{S} = \{s\}$, and $u$ be our only choice. Let $S'=S\cup\{v\}$. Then one can easily check that $S'$ is a super dominating set for $G$ and   $\gamma_{sp} (G)\leq \gamma_{sp}(G-e)+1$.
\end{itemize}
Therefore we have the result.
\qed
	\end{proof}

	\begin{remark}
Bounds in Theorem \ref{G-e} are sharp. For the lower bound, it suffices to consider $G=C_{14}$. Then by Theorem \ref{thm-2},  $\gamma_{sp}(C_{14})=8$ and $\gamma_{sp}(C_{14}-e)=\gamma_{sp}(P_{14})=7$. For the upper bound, consider to the Figure \ref{sharpg-e}. For graph $H$, by Theorem \ref{thm-1}, we know that  $\gamma_{sp}(H)\geq \frac{7}{2}$ and one can easily check that the set of black vertices is a super dominating set for that. So  $\gamma_{sp}(H)=4$. By the same argument, we have $\gamma_{sp}(G)=4$. Now $G-e=H\cup \{v\}$ and by the definition of the super dominating set, $v$ should be in our set and the set of black vertices is a super dominating set for $G-e$. Therefore $\gamma_{sp}(G-e)= \gamma_{sp} (G)+1$.

	\end{remark}

	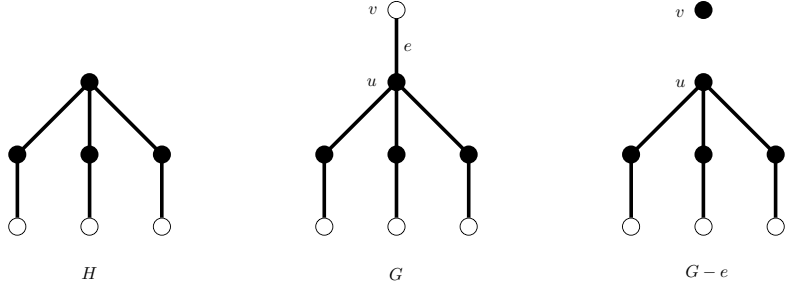
\begin{figure}
	\begin{center}
\psscalebox{0.6 0.6}
{
\begin{pspicture}(0,-6.4893055)(17.202778,-0.30791655)
\psline[linecolor=black, linewidth=0.08](0.20138885,-5.309305)(0.20138885,-3.7093055)(0.20138885,-3.7093055)
\psline[linecolor=black, linewidth=0.08](1.8013889,-5.309305)(1.8013889,-3.7093055)(1.8013889,-2.1093054)(1.8013889,-2.1093054)
\psline[linecolor=black, linewidth=0.08](3.401389,-5.309305)(3.401389,-3.7093055)(1.8013889,-2.1093054)(0.20138885,-3.7093055)(0.20138885,-3.7093055)
\psdots[linecolor=black, dotstyle=o, dotsize=0.4, fillcolor=white](0.20138885,-5.309305)
\psdots[linecolor=black, dotstyle=o, dotsize=0.4, fillcolor=white](1.8013889,-5.309305)
\psdots[linecolor=black, dotstyle=o, dotsize=0.4, fillcolor=white](3.401389,-5.309305)
\psdots[linecolor=black, dotsize=0.4](1.8013889,-2.1093054)
\psdots[linecolor=black, dotsize=0.4](0.20138885,-3.7093055)
\psdots[linecolor=black, dotsize=0.4](1.8013889,-3.7093055)
\psdots[linecolor=black, dotsize=0.4](3.401389,-3.7093055)
\psline[linecolor=black, linewidth=0.08](7.001389,-5.309305)(7.001389,-3.7093055)(7.001389,-3.7093055)
\psline[linecolor=black, linewidth=0.08](8.601389,-5.309305)(8.601389,-3.7093055)(8.601389,-2.1093054)(8.601389,-2.1093054)
\psline[linecolor=black, linewidth=0.08](10.201389,-5.309305)(10.201389,-3.7093055)(8.601389,-2.1093054)(7.001389,-3.7093055)(7.001389,-3.7093055)
\psdots[linecolor=black, dotstyle=o, dotsize=0.4, fillcolor=white](7.001389,-5.309305)
\psdots[linecolor=black, dotstyle=o, dotsize=0.4, fillcolor=white](8.601389,-5.309305)
\psdots[linecolor=black, dotstyle=o, dotsize=0.4, fillcolor=white](10.201389,-5.309305)
\psdots[linecolor=black, dotsize=0.4](8.601389,-2.1093054)
\psdots[linecolor=black, dotsize=0.4](7.001389,-3.7093055)
\psdots[linecolor=black, dotsize=0.4](8.601389,-3.7093055)
\psdots[linecolor=black, dotsize=0.4](10.201389,-3.7093055)
\psline[linecolor=black, linewidth=0.08](13.801389,-5.309305)(13.801389,-3.7093055)(13.801389,-3.7093055)
\psline[linecolor=black, linewidth=0.08](15.401389,-5.309305)(15.401389,-3.7093055)(15.401389,-2.1093054)(15.401389,-2.1093054)
\psline[linecolor=black, linewidth=0.08](17.001389,-5.309305)(17.001389,-3.7093055)(15.401389,-2.1093054)(13.801389,-3.7093055)(13.801389,-3.7093055)
\psdots[linecolor=black, dotstyle=o, dotsize=0.4, fillcolor=white](13.801389,-5.309305)
\psdots[linecolor=black, dotstyle=o, dotsize=0.4, fillcolor=white](15.401389,-5.309305)
\psdots[linecolor=black, dotstyle=o, dotsize=0.4, fillcolor=white](17.001389,-5.309305)
\psdots[linecolor=black, dotsize=0.4](15.401389,-2.1093054)
\psdots[linecolor=black, dotsize=0.4](13.801389,-3.7093055)
\psdots[linecolor=black, dotsize=0.4](15.401389,-3.7093055)
\psdots[linecolor=black, dotsize=0.4](17.001389,-3.7093055)
\psline[linecolor=black, linewidth=0.08](8.601389,-2.1093054)(8.601389,-0.5093054)(8.601389,-0.5093054)
\psdots[linecolor=black, dotsize=0.4](15.401389,-0.5093054)
\psdots[linecolor=black, dotstyle=o, dotsize=0.4, fillcolor=white](8.601389,-0.5093054)
\rput[bl](7.941389,-2.1893053){$u$}
\rput[bl](14.781389,-2.2093055){$u$}
\rput[bl](7.981389,-0.5893054){$v$}
\rput[bl](14.781389,-0.66930544){$v$}
\rput[bl](8.761389,-1.4093055){$e$}
\rput[bl](1.6213889,-6.4693055){$H$}
\rput[bl](8.441389,-6.4893055){$G$}
\rput[bl](15.001389,-6.4693055){$G-e$}
\end{pspicture}
}
\end{center}
\caption{Graphs $H$, $G$ and $G-e$, respectively.}\label{sharpg-e}
\end{figure}
	
Now we consider to the edge contraction of a graph and find upper  and lower bound for super domination number of that.	
	
\begin{theorem}\label{G/e}
Let $G=(V,E)$ be a graph and $e\in E$. Then,
$$\gamma _{sp}(G) - 1 \leq \gamma _{sp}(G/e)\leq \gamma _{sp}(G).$$
\end{theorem}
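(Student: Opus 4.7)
The plan is to prove both inequalities by transferring a super dominating set from one graph to the other while controlling the change in size by at most one. Throughout, let $e=uv$, let $w$ be the vertex of $G/e$ obtained by identifying $u$ and $v$, and set $X=V(G)\setminus\{u,v\}$, so that $V(G/e)=X\cup\{w\}$ and $N_{G/e}(w)=(N_G(u)\cup N_G(v))\setminus\{u,v\}$; every vertex of $X$ keeps the same neighbourhood in the two graphs, except that an edge to $u$ or $v$ in $G$ becomes an edge to $w$ in $G/e$.

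For the upper bound $\gamma_{sp}(G/e)\le \gamma_{sp}(G)$, I would start from a minimum super dominating set $D$ of $G$ and split on $|D\cap\{u,v\}|$. If $D\cap\{u,v\}=\emptyset$, then $D$ itself is super dominating in $G/e$: the $G$-witness of $u$ is non-adjacent to $v$ (otherwise $v$ would be an extra neighbour in $\overline{D}$), so in $G/e$ it becomes the witness of $w$, and all other witnesses transfer verbatim. If exactly one endpoint, say $u$, lies in $D$, then $D'=(D\setminus\{u\})\cup\{w\}$ works by an analogous check. The delicate case is $u,v\in D$: the candidate $D'=(D\setminus\{u,v\})\cup\{w\}$ has size $|D|-1$ and is super dominating unless the contraction destroys a private witness role. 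The worst scenario is that $u$ and $v$ privately witnessed two distinct vertices $x_u,x_v\in\overline{D}$, in which case $w$ now sees both of them; augmenting $D'$ by $x_u$ restores a set of size $|D|$, removes $x_u$ from the complement, and makes $w$ the unique witness of $x_v$ since $N_{G/e}(w)\cap\overline{D'}=(N_G(u)\cup N_G(v))\cap(\overline{D}\setminus\{x_u\})=\{x_v\}$. A single-vertex augmentation similarly handles the subcase in which only one of $u,v$ is the private witness whose role collapses.

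For the lower bound $\gamma_{sp}(G)\le \gamma_{sp}(G/e)+1$, I would start from a minimum super dominating set $S$ of $G/e$ and split on whether $w\in S$. If $w\in S$, set $S'=(S\setminus\{w\})\cup\{u,v\}$: every witness in $S\cap X$ transfers verbatim, and any vertex $x$ formerly witnessed by $w$ satisfies $(N_G(u)\cup N_G(v))\cap\overline{S'}=\{x\}$, so at least one of $u,v$ in $S'$ becomes a new witness. If $w\notin S$, let $y^{*}\in S$ be the $G/e$-witness of $w$; then in $G$, $y^{*}$ is adjacent to at least one of $u,v$ and to nothing else in $\overline{S}$. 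I would choose $z\in\{u,v\}$ with $y^{*}\sim z$ and set $S'=S\cup(\{u,v\}\setminus\{z\})$, so that the remaining endpoint is the unique neighbour of $y^{*}$ in $\overline{S'}$. Every other $G/e$-witness $y\in S$ satisfies $y\not\sim u,v$ in $G$ (its private neighbour in $\overline{S}$ lies in $X$, not at $w$), so it transfers cleanly. Both subcases give $|S'|\le |S|+1$.

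The main obstacle is the upper-bound case $u,v\in D$, where merging the two endpoints can simultaneously destroy two private witness roles; the fix above pays for the collapse with a single extra vertex, which is exactly the slack the theorem allows. The rest of the verification is routine bookkeeping on the identity $\overline{D'}\cap X=\overline{D}\cap X$ and the neighbourhood identity for $w$.
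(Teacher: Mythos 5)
Your proposal is correct and follows essentially the same route as the paper: the upper bound by cases on $\left|D\cap\{u,v\}\right|$ (with the same repair of adding one collapsed private neighbour when both endpoints lie in $D$), and the lower bound by cases on whether $w$ lies in the super dominating set of $G/e$, replacing $w$ by $\{u,v\}$ or adding the endpoint not seen by the witness of $w$. No substantive differences to report.
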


	\begin{proof}
Suppose that $e=uv\in E$ and  let $w$ be the vertex which is replacement of $u$ and $v$ in $G/e$. First, we find the upper bound for $\gamma _{sp}(G/e)$. Suppose that $D$ is super dominating set of $G$. We consider the following cases:
\begin{itemize}
\item[(i)]
$u,v\in D$. So there may exist vertices $x,y\in \overline{D}$ which are dominated by $u$ and $v$ such that $N(u)\cap \overline{D} = \{x\}$ and $N(v)\cap \overline{D} = \{y\}$. By the definition of super dominating set, $u$ and $v$ can not dominate more than one vertex in $\overline{D}$, so all other neighbours of them are in the dominating set. Now let 
$D'=\left( D- \{u,v\} \right) \cup \{w,x\} $. Since $x\in D'$, any other vertices in $\overline{D}$ except $y$, is dominated by the same vertex as before and $y$ is dominated by $w$ and $N(w)\cap \overline{D} = \{y\}$. Therefore $D'$ is a super dominating set for $G/e$ and $\gamma _{sp}(G/e)\leq \gamma _{sp}(G)$. Also, there may exist only one vertex in $\overline{D}$ like $z$ such that dominates by $u$ or $v$. Suppose that it is $v$. Then $N(v)\cap \overline{D} = \{z\}$. Now let $D''=\left( D- \{u,v\} \right) \cup \{w,z\} $. By the same argument as before, Since $z\in D''$, any other vertices in $\overline{D}$ is dominated by the same vertex as before and $D''$ is a super dominating set for $G/e$ and $\gamma _{sp}(G/e)\leq \gamma _{sp}(G)$. In case no vertices in $\overline{D}$ is dominated by any of $u$ or $v$, then $\left( D- \{u,v\} \right) \cup \{w\} $ is a super dominating set for  $G/e$ since every vertex in $\overline{D}$ is dominated as before, and $\gamma _{sp}(G/e)\leq \gamma _{sp}(G)-1$.
\item[(ii)]
$u\in D$ and $v\notin D$. If $v$ is dominated by $u$ and $N(u)\cap \overline{D} = \{v\}$, then all other neighbours of $u$ should be in $D$. Therefore $\left( D- \{u\} \right) \cup \{w\}$ is a super dominating set for $G/e$ since every vertex in $\overline{D}$ is dominated as before, and $\gamma _{sp}(G/e)\leq \gamma _{sp}(G)$. If $v$ is dominated by a vertex which is not $u$, then $u$ can not dominate any $w\in \overline{D} $, because $ \{v,w\}\subseteq N(u)\cap \overline{D}$. Therefore $\left( D- \{u\} \right) \cup \{w\}$ is a super dominating set for $G/e$ by the same argument as discussed, and $\gamma _{sp}(G/e)\leq \gamma _{sp}(G)$.
\item[(iii)]
$u,v\notin D$. Suppose that $a\in D$ is the vertex that $N(a)\cap \overline{D} = \{u\}$. Then by not changing dominating set for $G/e$, $N(a)\cap \overline{D} = \{w\}$ in $G/e$. So $D$ is a super dominating set for $G/e$, and $\gamma _{sp}(G/e)\leq \gamma _{sp}(G)$.
\end{itemize}
Hence $\gamma _{sp}(G/e)\leq \gamma _{sp}(G)$. Now we find a lower bound for  $\gamma _{sp}(G/e)$. For this purpose, first we form $G/e$ and find a super dominating set for that. Suppose that this set is $S$.  We have the following cases:
\begin{itemize}
\item[(i)]
$w\in S$. We claim that in this case $S'=\left( S- \{w\} \right) \cup \{u,v\}$ is a super dominating set for $G$. If we have $x\in \overline{S}$ such that $N(w)\cap \overline{S} = \{x\}$ in $G/e$, then without loss of generality, suppose that $x\in N(u)$ in $G$. Then, clearly $N(u)\cap \overline{S'} = \{x\}$ in $G$ and any other vertex in $\overline{S'}$ dominated same as before. If we do not have $x\in \overline{S}$ such that $N(w)\cap \overline{S} = \{x\}$ in $G/e$, then all vertices in $\overline{S'}$ dominated same as before. So $\gamma _{sp}(G)\leq \gamma _{sp}(G/e)+1$.
\item[(ii)]
$w\notin S$. Then there exists $y\in S$ such that $N(y)\cap \overline{S} = \{w\}$. We have two cases. First, $y\in N(u)$ and $y\in N(v)$ in $G$. Then $S'=S\cup\{v\}$ is a super dominating set for $G$, because $N(y)\cap \overline{S'} = \{u\}$ and all other vertices in $\overline{S'}$ dominated same as before. Second, $y\in N(u)$ and $y\notin N(v)$ in $G$. By the same argument, $S'=S\cup\{v\}$ is a super dominating set for $G$. Hence $\gamma _{sp}(G)\leq \gamma _{sp}(G/e)+1$.
\end{itemize}
Therefore we have the result.
\qed
	\end{proof}

	\begin{remark}
Bounds in Theorem \ref{G/e} are sharp. For the upper bound, it suffices to consider $G=P_{12}$. Then for every $e\in E(G)$, we have $G/e=P_{11}$. By Theorem \ref{thm-2}, $\gamma _{sp}(P_{12})=\gamma _{sp}(P_{11})=6$. So  $\gamma _{sp}(G/e)= \gamma _{sp}(G)$. For the lower bound, it suffices to consider $H=P_{11}$. Then for every $e\in E(H)$, we have $H/e=P_{10}$. By Theorem \ref{thm-2}, $\gamma _{sp}(P_{10})=5$. Hence  $\gamma _{sp}(H/e)= \gamma _{sp}(H)-1$.
	\end{remark}

We end this section by an immediate result of Theorems \ref{G-e} and \ref{G/e}:

\begin{corollary}
Let $G=(V,E)$ be a graph and $e\in E$. Then,
$$\frac{\gamma _{sp}(G-e)+\gamma _{sp}(G/e)-1}{2}\leq \gamma _{sp}(G) \leq \frac{\gamma _{sp}(G-e)+\gamma _{sp}(G/e)}{2}+1.$$
Let $\alpha = \frac{\gamma _{sp}(G-e)+\gamma _{sp}(G/e)}{2}$. Hence, if $\alpha \in \mathbb{Z}$, then $\gamma _{sp}(G)\in \{\alpha,\alpha+1\}$ and if $\alpha \notin \mathbb{Z}$, then $\gamma _{sp}(G-e)+\gamma _{sp}(G/e)$ is an odd number and $\gamma _{sp}(G)\in \{\alpha-\frac{1}{2},\alpha+\frac{1}{2}\}$. Therefore in general case, 
$$\gamma _{sp}(G)\in \{\alpha-\frac{1}{2},\alpha,\alpha+\frac{1}{2},\alpha+1\},$$
where $\alpha = \frac{\gamma _{sp}(G-e)+\gamma _{sp}(G/e)}{2}$.
\end{corollary}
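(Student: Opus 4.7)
The plan is to derive the corollary as a direct consequence of Theorems \ref{G-e} and \ref{G/e} by summing the two inequality chains and then using the integrality of $\gamma_{sp}(G)$ to refine the conclusion.

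First, I would rewrite the two preceding theorems in the form that puts $\gamma_{sp}(G)$ in the middle, so that Theorem \ref{G-e} becomes $\gamma_{sp}(G)-1 \leq \gamma_{sp}(G-e) \leq \gamma_{sp}(G)+1$ and Theorem \ref{G/e} becomes $\gamma_{sp}(G)-1 \leq \gamma_{sp}(G/e) \leq \gamma_{sp}(G)$. Adding these two chains termwise gives
$$2\gamma_{sp}(G)-2 \;\leq\; \gamma_{sp}(G-e)+\gamma_{sp}(G/e) \;\leq\; 2\gamma_{sp}(G)+1,$$
and solving each side for $\gamma_{sp}(G)$ immediately yields the first displayed inequality of the corollary.

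For the refinement, I would set $\alpha$ as in the statement and split on the parity of $\gamma_{sp}(G-e)+\gamma_{sp}(G/e)$. When this sum is even, $\alpha$ is an integer and the bound above reads $\alpha-\tfrac{1}{2}\leq \gamma_{sp}(G) \leq \alpha+1$; since $\gamma_{sp}(G)\in\mathbb{Z}$, the only possibilities are $\alpha$ and $\alpha+1$. When the sum is odd, write $\alpha=k+\tfrac{1}{2}$ for some $k\in\mathbb{Z}$; the bound becomes $k\leq \gamma_{sp}(G) \leq k+\tfrac{3}{2}$, whose only integer values are $k=\alpha-\tfrac{1}{2}$ and $k+1=\alpha+\tfrac{1}{2}$. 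Taking the union of the two cases produces the four-valued conclusion $\gamma_{sp}(G)\in\{\alpha-\tfrac{1}{2},\alpha,\alpha+\tfrac{1}{2},\alpha+1\}$.

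There is no genuinely hard step here since everything reduces to arithmetic on the previously established inequalities; the only issue to watch is the handling of the parity dichotomy and making sure the integer solutions inside each interval $[\alpha-\tfrac{1}{2},\alpha+1]$ or $[k,k+\tfrac{3}{2}]$ are listed correctly, i.e.\ avoiding off-by-one mistakes when translating the half-integer endpoints into integer-valued statements.
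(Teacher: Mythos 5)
Your proposal is correct and is precisely the argument the paper intends: the corollary is presented as an immediate consequence of Theorems \ref{G-e} and \ref{G/e}, obtained by adding the two inequality chains and then using integrality of $\gamma_{sp}(G)$ to enumerate the possible values in each parity case. Your case analysis on whether $\gamma_{sp}(G-e)+\gamma_{sp}(G/e)$ is even or odd, and the resulting lists of admissible integers, match the statement exactly.
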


\section{Super domination number of $G-v$ and $G/v$}

 Let $v$ be a vertex in graph $G$. The graph $G-v$ is a graph that is made  by deleting the vertex $v$ and all edges connected to $v$ from the graph $G$. The contraction of $v$ in $G$ denoted by $G/v$ is the graph obtained by deleting $v$ and putting a clique on the open neighbourhood of $v$. Note that this operation does not create parallel edges; if two neighbours of $v$ are already adjacent, then they remain simply adjacent (see \cite{Walsh}). In this section we  examine the effects on $\gamma _{sp}(G)$ when $G$ is modified by a vertex removal and vertex contraction.
 First, we consider to the vertex removal of a graph and find upper  and lower bound for super domination number of that.

	\begin{theorem}\label{G-v}
 Let $G=(V,E)$ be a graph and $v\in V$. Then,
 $$\gamma_{sp} (G)-1\leq \gamma_{sp}(G-v)\leq \gamma_{sp} (G).$$
	\end{theorem}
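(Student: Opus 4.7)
The plan is to prove each inequality by a short case analysis in the style of Theorems \ref{G-e} and \ref{G/e}, producing an explicit super dominating set on one side from one on the other.

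For the upper bound $\gamma_{sp}(G-v)\leq \gamma_{sp}(G)$, I would start with a minimum super dominating set $D$ of $G$ and split into cases according to whether $v\in D$. If $v\notin D$, then $D\subseteq V(G-v)$, and for every $x\in \overline{D}\setminus\{v\}$ the witness $w_x\in D$ with $N_G(w_x)\cap \overline{D}=\{x\}$ continues to satisfy $N_{G-v}(w_x)\cap (\overline{D}\setminus\{v\})=\{x\}$; so $D$ remains a super dominating set in $G-v$. If $v\in D$ and no vertex of $\overline{D}$ is the unique private neighbour of $v$, then $D\setminus\{v\}$ still works in $G-v$, giving an even better bound. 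The slightly more delicate subcase is when $v\in D$ and there is $u\in\overline{D}$ with $N_G(v)\cap\overline{D}=\{u\}$; here I would set $D'=(D\setminus\{v\})\cup\{u\}$ and check that every $x\in\overline{D}\setminus\{u\}$ still has a witness in $D'$ (its original witness $w_x$ is distinct from $v$, precisely because $v$ was the private witness of $u$). This gives $\gamma_{sp}(G-v)\leq|D|=\gamma_{sp}(G)$.

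For the lower bound $\gamma_{sp}(G)\leq \gamma_{sp}(G-v)+1$, the idea is much cleaner: take any super dominating set $S$ of $G-v$ and set $S'=S\cup\{v\}$ in $G$. Then $\overline{S'}=V(G)\setminus(S\cup\{v\})=V(G-v)\setminus S=\overline{S}$, and for every $x\in\overline{S'}$ the witness $w_x\in S$ from $G-v$ still satisfies $N_G(w_x)\cap \overline{S'}=\{x\}$, because adding $v$ as a possible new neighbour of $w_x$ does not enlarge $N_G(w_x)\cap\overline{S'}$ ($v\in S'$). Hence $S'$ is a super dominating set of $G$ of size $|S|+1$, which rearranges to the desired inequality.

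The main obstacle I anticipate is the bookkeeping in the upper bound when $v\in D$ and $v$ privately witnesses some $u\in\overline{D}$: one must verify that swapping $v$ for $u$ in $D$ does not spoil the private witness condition for any other vertex of $\overline{D}$. This reduces to the observation that no other $x\in\overline{D}$ can have $v$ as its private witness (since $N_G(v)\cap\overline{D}=\{u\}$), and that the new set $\overline{D'}$ in $G-v$ is just $\overline{D}\setminus\{u\}$, so the old witnesses remain valid. With that in hand the result follows.
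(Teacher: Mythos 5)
Your proposal is correct and follows essentially the same route as the paper: the upper bound via the case split on whether $v\in D$, with the swap $D'=(D\setminus\{v\})\cup\{u\}$ when $v$ privately witnesses $u$, and the lower bound via $S'=S\cup\{v\}$. If anything, your verification (noting $\overline{D'}=\overline{D}\setminus\{u\}$, that no other vertex can have $v$ as its witness, and that $v\in S'$ prevents the witness neighbourhoods from growing) is more explicit than the paper's.
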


	\begin{proof}
 First we find the upper bound for $\gamma_{sp}(G-v)$. Suppose that $D$ is a super dominating set of $G$. We consider to the following cases:
\begin{itemize}

\item[(i)]
$v\notin D$. Then removing $v$ does not have effect on our super dominating set and $D$ is a super dominating set for $G-v$ too, since every vertex in $\overline{D}$ dominated by the same vertex as before. So  $\gamma_{sp}(G-v)\leq \gamma_{sp} (G)$.
\item[(ii)]
$v \in D$. There may not exists any vertex in $\overline{D}$ which is dominated by $v$. So by the same argument as previous case, $D$ is a super dominating set for $G-v$ too, and $\gamma_{sp}(G-v)\leq \gamma_{sp} (G)$.
But, there may exists $u\in \overline{D}$ which is dominated by $v$ such that  $N(v)\cap \overline{D} = \{u\}$. By the definition of super dominating set, $v$ can not dominate more than one vertex in $\overline{D}$. So all other neighbours of $v$ are in $D$. Now let $D'=\left( D-\{v\}\right)\cup \{u\}$. One can easily check that $D'$ is a super dominating set for $G-v$. Hence $\gamma_{sp}(G-v)\leq \gamma_{sp} (G)$.
\end{itemize}
Therefore in all cases, $\gamma_{sp}(G-v)\leq \gamma_{sp} (G)$. Now we find a lower bound for  $\gamma_{sp}(G-v)$. First we form $G-v$ and find a super dominating set for that. Suppose that this set is $S$. Now let $S'=S\cup \{v\}$ and add all removed edges connected to $v$ to form $G$. Then all vertices in $\overline{S'}$ dominated by the same vertex as they dominated by vertices in $S$. Hence  $\gamma_{sp} (G)\leq \gamma_{sp}(G-v)+1$ and therefore we have the result. 
\qed
	\end{proof}

	\begin{remark}
Bounds in Theorem \ref{G-v} are sharp. For the upper bound, it suffices to consider $G=P_5$ and $v$ is adjacent to a pendant vertex. Then one can easily check that $\gamma_{sp}(G-v)=\gamma_{sp} (G)$. For the lower bound, it suffices to consider  $H=P_5$ and  $v$ is a pendant vertex. Then by Theorem \ref{thm-2}, $\gamma_{sp} (H)= 3$ and $\gamma_{sp} (H-v)= 2$ and therefore  $\gamma_{sp} (H)-1= \gamma_{sp}(H-v)$.
	\end{remark}

Before we  examine the effects on $\gamma _{sp}(G)$ when $G$ is modified by a vertex contraction, we show that there are some graphs $H=(V(H),E(H))$ and $v\in V(H)$ such that $|\gamma_{sp}(H)- \gamma_{sp} (H/v)|$ can be arbitrary large.

	\begin{theorem}
For every $n \in \mathbb{N}$, there exists a graph $G=(V,E)$ and a vertex $v\in V$ such that $\gamma_{sp}(G/v) - \gamma_{sp} (G)=n$.
	\end{theorem}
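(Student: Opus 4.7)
The plan is to exhibit an explicit family of graphs realizing the required gap, then reduce the computation to formulas already established in the earlier sections. I would take $G = F_{n+2}$, the friendship graph on $2(n+2)+1$ vertices, and let $v$ be its common (central) vertex, the one lying in all $n+2$ triangles.

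First, by Theorem \ref{Firend-thm}, $\gamma_{sp}(G) = (n+2)+1 = n+3$. Second, I would identify $G/v$ as a complete graph. The key observation is that $v$ is adjacent to every one of the remaining $2(n+2)$ vertices, so contracting $v$ deletes $v$ and forces $N(v)$ into a clique; the $n+2$ edges already present inside $N(v)$ (one per triangle) are subsumed by this clique and no parallel edges are created, so $G/v \cong K_{2n+4}$. Theorem \ref{thm-3}(i) then gives $\gamma_{sp}(G/v) = 2n+3$. Subtracting, $\gamma_{sp}(G/v)-\gamma_{sp}(G) = (2n+3)-(n+3) = n$, as required.

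The only step that calls for insight is choosing the construction: one wants a vertex whose neighborhood is large (so that contracting forces a big clique, pushing $\gamma_{sp}(G/v)$ up toward order $2k$) while keeping $\gamma_{sp}(G)$ only of order $k$. The friendship graph's central vertex fits both criteria, and the arithmetic $2k-1-(k+1) = k-2$ tells us that choosing $k = n+2$ hits the target exactly. Once the construction is in place, everything else follows immediately from Theorems \ref{Firend-thm} and \ref{thm-3}(i), with no separate verification of super-domination properties needed.
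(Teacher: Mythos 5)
Your proposal is correct and coincides with the paper's own proof: the paper likewise takes $G=F_{n+2}$ with $v$ the common vertex of degree $2n+4$, observes $G/v=K_{2n+4}$, and applies Theorem \ref{Firend-thm} and Theorem \ref{thm-3}(i) to get $(2n+3)-(n+3)=n$. Your extra remark that the pre-existing triangle edges inside $N(v)$ do not create parallel edges is a small but welcome clarification the paper leaves implicit.
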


	\begin{proof}
Let $G=F_{n+2}$ be a friendship graph of order $n+2$ (see $F_n$ in Figure \ref{friend}), and $v$ be the common vertex with degree $2n+4$. By Theorem \ref{Firend-thm}, $\gamma_{sp} (F_{n+2})=n+3$. Now by removing $v$ and putting a clique on the open neighbourhood of that, we have $G/v=K_{2n+4}$. By Theorem \ref{thm-3}, $\gamma_{sp} (K_{2n+4})=2n+3$. Therefore we have the result.
\qed
	\end{proof}	

Now we find upper  and lower bound for super domination number of  a graph when it is modified by  vertex contraction. First we consider to pendant vertices.

	\begin{theorem}\label{G/vpendant}
  Let $G=(V,E)$ be a graph and $v\in V$ is a pendant vertex. Then,
 $$\gamma_{sp} (G)-1\leq \gamma_{sp}(G/v)\leq \gamma_{sp} (G).$$
	\end{theorem}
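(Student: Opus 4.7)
The plan is to observe that for a pendant vertex, the vertex contraction operation collapses to vertex deletion, and then to invoke Theorem \ref{G-v} directly.

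First I would unpack the definition of $G/v$ in this special case. Since $v$ is pendant, its open neighborhood $N(v)$ consists of a single vertex, call it $u$. The definition of $G/v$ instructs us to delete $v$ and then place a clique on $N(v)$. But a clique on a one-element vertex set contributes no new edges (there are no pairs of distinct vertices on which to add an edge). Hence the resulting graph is obtained from $G$ simply by removing $v$ and the single edge incident to it; that is, $G/v = G-v$ as graphs.

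Once this identification is made, the two inequalities
$$\gamma_{sp}(G)-1 \leq \gamma_{sp}(G/v) \leq \gamma_{sp}(G)$$
are exactly the statement of Theorem \ref{G-v} for the vertex $v$, so the result follows immediately. I would structure the write-up as a one-line reduction rather than redoing the case analysis.

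The only point requiring any care is the convention that a clique on a single vertex is just that vertex with no self-loop, so that $G/v$ is truly a simple graph equal to $G-v$; this is the standard interpretation and is implicit in the parenthetical remark in the paper about vertex contraction not creating parallel edges. With that convention fixed there is no genuine obstacle — the result is really a corollary of Theorem \ref{G-v} specialized to pendant vertices, and the role of this statement is mainly to isolate the easy case before tackling non-pendant $v$, where the added clique on $N(v)$ makes the argument substantially harder.
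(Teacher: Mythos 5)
Your proposal is correct and follows exactly the paper's own argument: the paper likewise observes that for a pendant vertex $G/v=G-v$ and then cites Theorem \ref{G-v}. Your extra remark about the one-vertex clique contributing no edges just makes explicit what the paper leaves implicit.
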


	\begin{proof}
If $v$ be a pendant vertex, then $G/v=G-v$. Therefore by Theorem \ref{G-v}, we have the result.
\qed
	\end{proof}

	\begin{remark}
Bounds in Theorem \ref{G/vpendant} are sharp. For the upper bound, it suffices to consider $G=P_6$ and $v$ is  a pendant vertex. Then by Theorem \ref{thm-2}, $\gamma_{sp}(G/v)=\gamma_{sp} (G)=3$.  For the lower bound, it suffices to consider $G=K_{1,n}$ and $v$ is  a pendant vertex. So $K_{1,n}/v=K_{1,n-1}$. Then by Theorem \ref{thm-3}, $\gamma_{sp}(G/v)=\gamma_{sp} (G)-1=n-2$.
	\end{remark}

As an an immediate result of Theorems \ref{G-v} and \ref{G/vpendant} we have:

\begin{corollary}
Let $G=(V,E)$ be a graph and $v\in V$ is a pendant vertex. Then,
$$\frac{\gamma _{sp}(G-v)+\gamma _{sp}(G/v)}{2}\leq \gamma _{sp}(G) \leq \frac{\gamma _{sp}(G-v)+\gamma _{sp}(G/v)}{2}+1.$$
Let $\beta = \frac{\gamma _{sp}(G-v)+\gamma _{sp}(G/v)}{2}$. Hence, if $\beta \in \mathbb{Z}$, then $\gamma _{sp}(G)\in \{\beta,\beta+1\}$ and if $\beta \notin \mathbb{Z}$, then $\gamma _{sp}(G-v)+\gamma _{sp}(G/v)$ is an odd number and $\gamma _{sp}(G)= \beta+\frac{1}{2}$.
\end{corollary}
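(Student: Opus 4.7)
The plan is to leverage the central observation already used in the proof of Theorem~\ref{G/vpendant}, namely that when $v$ is a pendant vertex the two modified graphs coincide: $G/v = G-v$. Consequently $\gamma_{sp}(G/v) = \gamma_{sp}(G-v)$, so $\beta = \frac{\gamma_{sp}(G-v) + \gamma_{sp}(G/v)}{2} = \gamma_{sp}(G-v)$, which is automatically an integer. This means the case $\beta \notin \mathbb{Z}$ is vacuously satisfied and the corollary reduces entirely to the integer case.

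Next I would derive the two inequalities directly from Theorem~\ref{G-v}, which gives $\gamma_{sp}(G) - 1 \leq \gamma_{sp}(G-v) \leq \gamma_{sp}(G)$, i.e., $\gamma_{sp}(G-v) \leq \gamma_{sp}(G) \leq \gamma_{sp}(G-v) + 1$. Substituting $\gamma_{sp}(G-v) = \beta$ yields
$$\beta \leq \gamma_{sp}(G) \leq \beta + 1,$$
which is stronger than the claimed inequality $\frac{\gamma_{sp}(G-v)+\gamma_{sp}(G/v)}{2} \leq \gamma_{sp}(G) \leq \frac{\gamma_{sp}(G-v)+\gamma_{sp}(G/v)}{2}+1$. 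In particular, since $\beta \in \mathbb{Z}$, we immediately conclude $\gamma_{sp}(G) \in \{\beta, \beta+1\}$, matching the first case of the corollary.

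There is essentially no obstacle here, as the result is an immediate consequence of Theorems~\ref{G-v} and~\ref{G/vpendant}; the only subtlety worth pointing out is that, because $G/v$ and $G-v$ agree for pendant $v$, the quantity $\beta$ is always integral, so the ``$\beta \notin \mathbb{Z}$'' branch never actually occurs for a pendant vertex. I would finish by remarking that both values in $\{\beta, \beta+1\}$ can be attained, referring back to the tightness examples $G = P_6$ (giving $\gamma_{sp}(G) = \beta$) and $G = K_{1,n}$ (giving $\gamma_{sp}(G) = \beta + 1$) used in the remark following Theorem~\ref{G/vpendant}.
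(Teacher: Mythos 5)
Your proof is correct and follows essentially the same route as the paper, which presents this corollary as an immediate consequence of Theorems \ref{G-v} and \ref{G/vpendant} (the latter's own proof resting on the identity $G/v=G-v$ for pendant $v$). Your added observation that $\beta=\gamma_{sp}(G-v)$ is therefore always an integer, so the ``$\beta\notin\mathbb{Z}$'' branch of the statement is vacuous for pendant vertices, is accurate and slightly sharpens the corollary as written.
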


Now we consider to vertex $v$ of a graph $G$ which is not pendant and find upper and lower bound for $\gamma_{sp}(G/v)$.

	\begin{theorem}\label{G/v}
 Let $G=(V,E)$ be a graph and $v\in V$ is not a pendant vertex.  Then,
 $$\gamma_{sp} (G)-1\leq \gamma_{sp}(G/v)\leq \gamma_{sp} (G)+\lfloor \frac{\deg (v)}{2} \rfloor -1.$$
	\end{theorem}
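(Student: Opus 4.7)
\medskip
\noindent\textbf{Proof plan.}
I plan to prove the two inequalities separately, following the casework style of Theorems~\ref{G-e}, \ref{G/e}, and \ref{G-v}. Throughout let $P=N_G(v)=\{w_1,\dots,w_k\}$ with $k=\deg(v)\ge 2$.

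For the lower bound $\gamma_{sp}(G)-1\le\gamma_{sp}(G/v)$, I would take a minimum super dominating set $S$ of $G/v$ and show that $S'=S\cup\{v\}$ is super dominating in $G$. For any $u\in\overline{S'}$, fix $x\in S$ with $N_{G/v}(x)\cap\overline{S}=\{u\}$. If $x\notin P$ then $N_G(x)=N_{G/v}(x)$ and $x$ still super-dominates $u$ in $G$. If $x=w_i\in P$, passing back to $G$ deletes the clique edges from $w_i$ to the other $w_j$'s and restores $w_iv$; a short verification shows $w_i$ still super-dominates $u$ in $G$ unless $u=w_j$ with $w_iw_j\notin E(G)$, in which case the identity $N_{G/v}(w_i)\cap\overline{S}=\{w_j\}$ forces every $w_\ell$ with $\ell\neq i,j$ to lie in $S$, whence $N_G(v)\cap\overline{S'}=\{w_j\}$ and $v$ itself serves as the super-dominator.

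For the upper bound $\gamma_{sp}(G/v)\le\gamma_{sp}(G)+\lfloor k/2\rfloor-1$, I would begin with a minimum super dominating set $D$ of $G$ and construct $D'\subseteq V(G/v)$ of the form $D_0\cup T$, where $D_0=D\setminus\{v\}$ if $v\in D$ and $D_0=D$ otherwise, and $T\subseteq P\setminus D_0$ is a small correction set. Writing $p=|P\cap\overline{D}|$, the sub-cases $p\le 1$ are easy: $D_0$, or a single swap $v\leftrightarrow w_{j_0}$ when $v\in D$ super-dominates the unique $w_{j_0}\in P\cap\overline{D}$, already super-dominates $G/v$ and gives $|D'|\le|D|$. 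In the main sub-case $p\ge 2$, each $w_\ell\in P\cap D$ gains all of $P\setminus\{w_\ell\}$ in its $G/v$-neighborhood and so fails to super-dominate its $G$-super-dominee $u$ in $G/v$; I would take $T$ to be the set of vertices $u\in\overline{D}$ whose only $G$-super-dominators lie in $P\cup\{v\}$, so that adjoining $T$ removes them from $\overline{D'}$.

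The main obstacle is verifying $|T|\le\lfloor k/2\rfloor-1+\mathbf{1}_{v\in D}$. The key counting input is that fixing one super-dominator per vertex yields an injective map $u\mapsto x_u$ from $\overline{D}$ into $D$; restricted to $T$ it takes values in $(P\cap D)\cup\{v\}$, so $|T|\le\min(p,\,k-p+\mathbf{1}_{v\in D})$. Refining further, in the sub-case $p\ge 2$ the vertex $v$ cannot itself super-dominate anyone in $G$ (since $|N_G(v)\cap\overline{D}|=p\ge 2$), which sharpens the image to $P\cap D$ and gives $|T|\le\min(p,k-p)\le\lfloor k/2\rfloor$. When $v\in D$, discarding $v$ from $D_0$ supplies the final $-1$, while when $v\notin D$ the same slack is recovered from the $G$-super-dominator $w_{\ell_0}\in P\cap D$ of $v$, whose $G/v$-neighborhood meets $\overline{D_0}$ exactly in $P\cap\overline{D}$ and can therefore absorb one otherwise-required addition to $T$. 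Combining these estimates yields $|D'|\le|D|+\lfloor k/2\rfloor-1$, as required.
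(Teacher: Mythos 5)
Your lower bound argument ($S\cup\{v\}$ works in $G$, with $v$ taking over as super-dominator exactly when the old super-dominator $w_i$ used a clique edge to reach $w_j$) is correct and is essentially the paper's argument, organized by super-dominator instead of by the size of $N(v)\cap\overline{S}$. The problem is in the upper bound, specifically in the counting step you yourself flag as the main obstacle.

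The claim $|T|\le\min\bigl(p,\,k-p+\mathbf{1}_{v\in D}\bigr)$ does not follow from the injection $u\mapsto x_u$, and the ``$\le p$'' half of it is false. The injection only shows $|T|\le|(P\cap D)\cup\{v\}|=k-p+\mathbf{1}_{v\in D}$. To get $|T|\le p$ you would need $T\subseteq P\cap\overline{D}$, but a vertex $u\notin P$ can perfectly well have all of its super-dominators inside $P\cap D$: attach a pendant $u_\ell$ to each $w_\ell$ for $\ell=1,\dots,k-2$, put $w_{k-1},w_k$ in $\overline{D}$ super-dominated from outside $N[v]$, and each $u_\ell$ lies in $T$ while $p=2$; then $|T|=k-2$, which exceeds $\lfloor k/2\rfloor$ for $k\ge 5$, so $D_0\cup T$ is too large. (Note also the internal inconsistency: you declare $T\subseteq P\setminus D_0$ but then define $T$ as a set of super-dominees, which need not lie in $P$; if you really restrict $T$ to $P$, then $D_0\cup T$ fails to be super dominating, because the stranded super-dominees outside $P$ are left uncovered.)

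The missing idea is that $\min(p,k-p)$ comes from comparing \emph{two different constructions}, not from bounding a single set two ways. Besides $D_0\cup T$ (cost $|T|\le k-p$), one can instead take $D_0\cup(P\cap\overline{D})$ (cost $p$): once all of $P$ lies in the new set, every $w_\ell\in P\cap D$ satisfies $N_{G/v}(w_\ell)\cap\overline{D'}\subseteq N_G(w_\ell)\cap\overline{D}$, so it still super-dominates its old super-dominee, and no vertex is stranded. Taking whichever of the two sets is smaller gives at most $\min(|T|,p)\le\min(k-p,p)\le\lfloor k/2\rfloor$ added vertices, and the $-1$ from discarding $v$ (when $v\in D$; the case $v\notin D$ is degenerate since then no vertex of $P\cap D$ super-dominates anything but $v$, so $T=\emptyset$). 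With that repair your outline goes through; as written, the step $|T|\le p$ is a genuine gap. (For what it is worth, the paper's own case (iii) also only adds super-dominees lying inside $N(v)$ and ignores stranded super-dominees outside $N(v)$; the two-construction comparison above is the clean way to close that case.)
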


	\begin{proof}
Suppose that $v\in V$ such that  $\deg (v)=n\geq2$ and $N(v)=\{v_1,v_2,\ldots,v_n\}$.  First we find an upper bound for $\gamma_{sp}(G/v)$. Suppose that $D$ is a super dominating set for $G$. We have the following cases:
\begin{itemize}
\item[(i)]
$v\notin D$. So, there exists $v_r\in N(v)\cap D$ such that $N(v_r)\cap \overline{D} = \{v\}$ which means that all other neighbours of $v_r$ are in $D$ too. There is no vertex such as $v_p\in N(v)$  that dominates $v_q\in N(v)\cap \overline{D}$ and satisfies the condition of super dominating set, because in that case we have  $\{v_q,v\}\subseteq N(v_p)\cap \overline{D} $ which is a contradiction. So all vertices in $ N(v)\cap \overline{D}$ dominate by some vertices which are not in $N(v)$.  Now by removing $v$ and putting a clique on the open neighbourhood of that, $D$ is a super dominating set for the new graph too. Hence, $\gamma_{sp}(G/v)\leq \gamma_{sp} (G)$.
\item[(ii)]
$v\in D$ and for some $1 \leq i \leq n$, there exists $v_i\in N(v)$ such that  $N(v)\cap \overline{D} = \{v_i\}$. So, all other neighbours of $v$ should be in $D$. Now let $D'=\left(D-\{v\} \right)\cup \{v_i\} $. Clearly, $D'$ is a super dominating set for $G/v$, since every vertex in $\overline{D'}$ is dominated by the same vertex as before. So, in this case $\gamma_{sp}(G/v)\leq \gamma_{sp} (G)$.
\item[(iii)]
$v\in D$ and for every $1 \leq i \leq n$, there does not exist $v_i\in N(v)$ such that  $N(v)\cap \overline{D} = \{v_i\}$. If $v_i\in N(v)$ is dominated by $v_i'$ such that $v_i'\notin N(v)$, then after removing $v$ and putting a clique on the open neighbourhood of that, $v_i$ still can dominated by $v_i'$ and $N(v_i')\cap \overline{D} = \{v_i\}$. So we keep all vertices in $D-N(v)$ in our dominating set. If $v_i\in N(v)$ is dominated by $v_j$ such that $v_j\in N(v)$ and $N(v_j)\cap \overline{D} = \{v_i\}$, Then we simply add $v_i$ to our dominating set. At most we have $\lfloor \frac{n}{2} \rfloor$ vertices with this condition. Without loss of generality, suppose that $v_1$ dominates $v_2$, $v_3$ dominates $v_4$ and so on. Then by our argument, $(D-\{v\})\cup\{v_2,v_4,\ldots\}$ is a super dominating set for $G/v$ and $\gamma_{sp}(G/v)\leq \gamma_{sp} (G)+\lfloor \frac{n}{2} \rfloor -1$. Note that if $\deg (v)$ is an odd number, then one of the vertices in the neighbourhood of $v$, say $x$, may not be adjacent to any vertex in $N(v)$, but it will be adjacent to all of them in $G/v$. Since all vertices in $N(v)-\{x\}$ are in our new dominating set, then $x$ is dominated by the same vertex as before or is in the dominating set which both cases do not have effect on our dominating set. 
\end{itemize}
So by our argument $\gamma_{sp}(G/v)\leq \gamma_{sp} (G)+\lfloor \frac{\deg (v)}{2} \rfloor -1$. Now we find a lower bound for $\gamma_{sp}(G/v)$. First we remove $v$ and put a clique on the open neighbourhood of that. Now we find a super dominating set dominating set for $G/v$. Suppose that this set is $S$. If we have two or more vertices in $N(v)$ such that they are not in $S$, then these vertices can not dominate by vertices in $N(v)$ because it contradicts with the definition of super dominating set. Also by the same reason, all of the vertices in $G$ dominate by some vertices which are not adjacent to $v$.  So all vertices in $\overline{S}$ are dominated by vertices in $S-N(v)$. Hence if we form $G$ and add and remove all the corresponding edges, then $S\cup\{v\}$ is a super dominating set for $G$. If we have only one vertex in $N(v)$, say $x$, such that  $x\notin S$, then when we form $G$, the set $S\cup\{v\}$ is a super dominating set for $G$. Because $x$ is dominated by $v$ and $N(v)\cap \overline{S} = \{x\}$, and the rest of vertices in $\overline{S}$, are dominated by the same vertices as before. In case all vertices in $N(v)$ be in $S$, Then by the same argument, again $S\cup\{v\}$ is a super dominating set for $G$. Therefore $\gamma_{sp}(G)\leq \gamma_{sp} (G/v)+1$ and we have the result.
\qed
	\end{proof}

	\begin{remark}
Bounds in Theorem \ref{G/v} are sharp. For the upper bound, it suffices to consider $G$ as shown in Figure \ref{G/vnonpendant}. One can easily check that the set of black vertices in $G$ and $G/v$ are super dominating set for them. Now we show that we can not have super dominating set for them with smaller size. For $G$, by Theorem \ref{thm-1}, $\gamma_{sp} (G)\geq \frac{9}{2}$. Hence $\gamma_{sp} (G)=5$. For $G/v$, Fisrt we consider to all vertices except $y$. By the definition of super dominating set, at least six of them should be in our dominating set. Among $x$ and $y$, we need at least one them in our dominating set too. So we need at least seven vertices in our dominating set. Hence $\gamma_{sp} (G/v)=7$. Therefore $\gamma_{sp}(G/v)= \gamma_{sp} (G)+\lfloor \frac{\deg (v)}{2} \rfloor -1$. For the lower bound, it suffices to consider  $H=P_5$ and  $v$ is not a pendant vertex. So $P_5/v=P_4$. Then by Theorem \ref{thm-2}, $\gamma_{sp} (H)= 3$ and $\gamma_{sp} (H/v)= 2$ and therefore  $\gamma_{sp} (H)-1= \gamma_{sp}(H/v)$.
	\end{remark}	
	
\begin{figure}
\begin{center}
\psscalebox{0.6 0.6}
{
\begin{pspicture}(0,-7.1014423)(14.607116,0.29567322)
\psline[linecolor=black, linewidth=0.04](0.01,-1.1014422)(0.01,-1.1014422)
\psdots[linecolor=black, dotsize=0.4](3.21,-1.5014423)
\psdots[linecolor=black, dotsize=0.4](3.21,0.09855774)
\psdots[linecolor=black, dotsize=0.4](3.21,-3.5014422)
\psdots[linecolor=black, dotsize=0.4](2.41,-5.901442)
\psdots[linecolor=black, dotsize=0.4](4.01,-5.901442)
\psdots[linecolor=black, dotsize=0.4](5.61,-2.7014422)
\psdots[linecolor=black, dotsize=0.4](5.61,-4.301442)
\psdots[linecolor=black, dotsize=0.4](0.81,-2.7014422)
\psdots[linecolor=black, dotsize=0.4](0.81,-4.301442)
\psline[linecolor=black, linewidth=0.08](5.61,-4.301442)(0.81,-2.7014422)(0.81,-4.301442)(5.61,-2.7014422)(5.61,-4.301442)(5.61,-4.301442)
\psline[linecolor=black, linewidth=0.08](3.21,0.09855774)(3.21,-1.5014423)(3.21,-3.5014422)(4.01,-5.901442)(2.41,-5.901442)(3.21,-3.5014422)(3.21,-3.5014422)
\rput[bl](3.31,-3.2214422){$v$}
\rput[bl](3.53,-1.5814422){$x$}
\rput[bl](3.55,-0.02144226){$y$}
\psdots[linecolor=black, dotsize=0.4](12.01,-1.5014423)
\psdots[linecolor=black, dotsize=0.4](12.01,0.09855774)
\psdots[linecolor=black, dotsize=0.4](11.21,-5.901442)
\psdots[linecolor=black, dotsize=0.4](12.81,-5.901442)
\psdots[linecolor=black, dotsize=0.4](14.41,-2.7014422)
\psdots[linecolor=black, dotsize=0.4](14.41,-4.301442)
\psdots[linecolor=black, dotsize=0.4](9.61,-2.7014422)
\psdots[linecolor=black, dotsize=0.4](9.61,-4.301442)
\rput[bl](12.33,-1.5814422){$x$}
\rput[bl](12.35,-0.02144226){$y$}
\rput[bl](3.01,-7.0214424){$G$}
\rput[bl](11.57,-7.1014423){$G/v$}
\psline[linecolor=black, linewidth=0.08](12.01,0.09855774)(12.01,-1.5014423)(14.41,-2.7014422)(14.41,-4.301442)(12.81,-5.901442)(11.21,-5.901442)(9.61,-4.301442)(9.61,-2.7014422)(12.01,-1.5014423)(11.21,-5.901442)(11.21,-5.901442)
\psline[linecolor=black, linewidth=0.08](12.01,-1.5014423)(12.81,-5.901442)(12.81,-5.901442)
\psline[linecolor=black, linewidth=0.08](9.61,-4.301442)(12.01,-1.5014423)(14.41,-4.301442)(9.61,-4.301442)(14.41,-2.7014422)(12.81,-5.901442)(9.61,-2.7014422)(14.41,-2.7014422)(11.21,-5.901442)(9.61,-2.7014422)(9.61,-2.7014422)
\psline[linecolor=black, linewidth=0.08](9.61,-2.7014422)(14.41,-4.301442)(14.41,-4.301442)
\psline[linecolor=black, linewidth=0.08](11.21,-5.901442)(14.41,-4.301442)(14.41,-4.301442)
\psline[linecolor=black, linewidth=0.08](12.81,-5.901442)(9.61,-4.301442)(9.61,-4.301442)
\psdots[linecolor=black, dotstyle=o, dotsize=0.4, fillcolor=white](3.21,-1.5014423)
\psdots[linecolor=black, dotstyle=o, dotsize=0.4, fillcolor=white](12.01,-1.5014423)
\psdots[linecolor=black, dotstyle=o, dotsize=0.4, fillcolor=white](0.81,-2.7014422)
\psdots[linecolor=black, dotstyle=o, dotsize=0.4, fillcolor=white](2.41,-5.901442)
\psdots[linecolor=black, dotstyle=o, dotsize=0.4, fillcolor=white](5.61,-2.7014422)
\end{pspicture}
}
\end{center}
\caption{Graphs $G$ and $G/v$}\label{G/vnonpendant}
\end{figure}
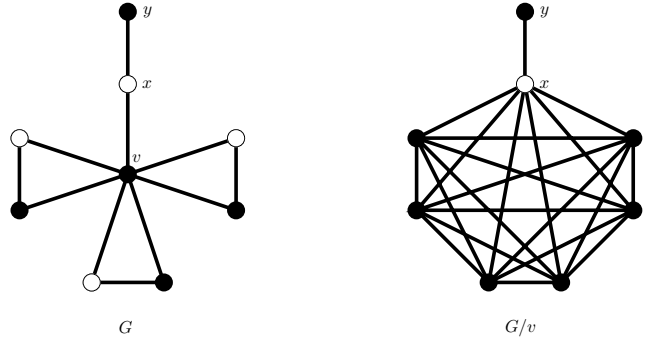

We end this paper by an immediate result of Theorems \ref{G-v} and \ref{G/v}.

\begin{corollary}
Let $G=(V,E)$ be a graph and $v\in V$ is not a pendant vertex. Then,
$$\frac{\gamma _{sp}(G-v)+\gamma _{sp}(G/v)-\lfloor \frac{\deg (v)}{2} \rfloor +1}{2}\leq \gamma _{sp}(G) \leq \frac{\gamma _{sp}(G-v)+\gamma _{sp}(G/v)}{2}+1.$$
Let $\theta = \frac{\gamma _{sp}(G-v)+\gamma _{sp}(G/v)}{2}$ and $\deg(v)=2$. Hence, if $\theta \in \mathbb{Z}$, then $\gamma _{sp}(G)\in \{\theta,\theta +1\}$ and if $\theta \notin \mathbb{Z}$, then $\gamma _{sp}(G-v)+\gamma _{sp}(G/v)$ is an odd number and $\gamma _{sp}(G)\in \{\theta-\frac{1}{2} , \theta+\frac{1}{2} \}$. Therefore in general case, if $\deg(v)=2$, then
$$\gamma _{sp}(G)\in \{\theta-\frac{1}{2},\theta,\theta+\frac{1}{2},\theta+1\},$$
where $\theta = \frac{\gamma _{sp}(G-v)+\gamma _{sp}(G/v)}{2}$.
\end{corollary}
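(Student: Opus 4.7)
The plan is to derive both displayed bounds by adding corresponding one-sided inequalities from Theorems \ref{G-v} and \ref{G/v}, and then to unpack the structural consequence for $\deg(v) = 2$ via a short parity check.

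First, I would rewrite the two preceding theorems in one-sided form. Theorem \ref{G-v} supplies
\[
\gamma_{sp}(G-v) \leq \gamma_{sp}(G) \leq \gamma_{sp}(G-v) + 1,
\]
and Theorem \ref{G/v}, applicable since $v$ is assumed non-pendant, supplies
\[
\gamma_{sp}(G/v) - \lfloor \tfrac{\deg(v)}{2} \rfloor + 1 \;\leq\; \gamma_{sp}(G) \;\leq\; \gamma_{sp}(G/v) + 1.
\]
Adding the two lower bounds on $\gamma_{sp}(G)$ and dividing by $2$ yields the stated left inequality; adding the two upper bounds and dividing by $2$ yields the stated right inequality.

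For the second part, specialise to $\deg(v) = 2$, so that $\lfloor \deg(v)/2 \rfloor = 1$ and the bounds collapse to $\theta \leq \gamma_{sp}(G) \leq \theta + 1$. If $\theta \in \mathbb{Z}$, both endpoints are integers and the conclusion $\gamma_{sp}(G) \in \{\theta, \theta + 1\}$ is immediate. If $\theta \notin \mathbb{Z}$, then $2\theta = \gamma_{sp}(G-v) + \gamma_{sp}(G/v)$ is odd, so $\theta$ is a half-integer; the integer $\gamma_{sp}(G)$ lying in the length-one interval $[\theta, \theta + 1]$ is then forced into $\{\theta - \tfrac12, \theta + \tfrac12\}$, giving the stated alternative. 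The final ``general case'' list is just the union of the two subcases.

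No step presents a real obstacle: the argument is essentially additive bookkeeping on top of the two preceding theorems, together with a parity observation. The only place that warrants a moment of care is verifying that the asymmetric lower bound supplied by Theorem \ref{G/v} collapses to exactly $\theta$ when $\deg(v) = 2$, which is precisely what makes the resulting interval $[\theta, \theta + 1]$ tight enough to reach the claimed case analysis.
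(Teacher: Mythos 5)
Your proposal is correct and matches the paper's intended derivation: the corollary is presented there as an immediate consequence of Theorems \ref{G-v} and \ref{G/v}, obtained exactly by adding the one-sided bounds and dividing by two, followed by the same parity check for $\deg(v)=2$. (As a minor aside, when $\theta\notin\mathbb{Z}$ the interval $[\theta,\theta+1]$ actually contains only the single integer $\theta+\tfrac12$, so the conclusion could be sharpened, but your argument establishes the statement as written.)
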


\section{Conclusions}

In this paper, we obtained some lower and upper bounds of super domination number of graphs which modified  by vertex and edge removing, and also vertex and edge contraction regarding the super domination number of the main graph.  Also
we showed that these bounds are sharp.We presented some results for super domination number of some specific graph clasess. Future topics of interest for future research include the following suggestions:

\begin{itemize}
\item[(i)]
Finding super domination number of Dutch windmill graph $D_n^{(m)}$ when $m$ is an even number.
\item[(ii)]
Finding super domination number of subdivision and power of a graph.
\item[(iii)]
Finding super  domination number of link, chain, etc of graphs.
\end{itemize}

	\section{Acknowledgements} 

	The  author would like to thank the Research Council of Norway and Department of Informatics, University of Bergen for their support.

\end{document}